\newtheorem{theorem}{Theorem}
\newtheorem{conjecture}{Conjecture}
\newtheorem{definition}{Definition}
\begin{document}

\title{On a conjecture regarding primality of numbers constructed from prepending and appending identical digits}

\author{Chai Wah Wu\\ IBM T. J. Watson Research Center\\ P. O. Box 218, Yorktown Heights, New York 10598, USA\\e-mail: chaiwahwu@member.ams.org}
\maketitle

\begin{abstract}
Consider the operation of adding the same number of identical digits to the left and to the right of a number $n$.  In OEIS sequence A090287, it was conjectured that this operation will not produce a prime if and only if $n$ is a palindrome with an even number of digits.  We show that this conjecture is false by showing that this property also holds for $n=231$, $n=420$, and an infinite number of other values of $n$.  The analysis involves looking at the prime factors of repunits and we present an algorithm to find $n$ which do not produce a prime under this operation.
\end{abstract}

\section{Introduction}

The topic of this paper concerns the primality of numbers obtained by adding digits to the left and/or to the right of a number.  There has been many papers and OEIS sequences \cite{oeis} denoted to this subject.  For instance, Ref. \cite{angell:trunc:1977} considers right and left truncatable primes, i.e. primes that remained primes when successive digits in base $b$ are removed from the left and from the right respectively (see also OEIS sequences A024785 and A133758).   In Ref. \cite{kahan:trunc:1998} the authors consider restricted left truncatable primes which are left truncatable primes that cannot be obtained by left truncation of another prime.  In Ref. \cite{honaker:pyramid:2000} palindromic primes are studied that remain palindromic primes when one (or more) digit is removed from both the left and the right (see also OEIS sequences A256957 and A034276). 
In this paper we study another such problem.  OEIS sequence A090287 (\url{http://oeis.org/A090287}) is defined as follows: $a(n)$ is the smallest prime obtained by inserting $n$ between two copies of a number with identical digits, or $0$ if no such prime exists.  In other words, $a(n)$ (when it is nonzero) is the smallest prime formed by concatenating (in decimal) $u$, $n$ and $u$ where all the digits of $u$ are identical. The author of the sequence conjectured that 
$a(n) = 0$ if and only if $n$ is a palindrome with an even number of digits.  We show that this conjecture is false.  In particular, we show that there are an infinite number of counterexamples, with the first one being $n=231$.

\section{Additional values of $n$ for which $a(n) = 0$} \label{sec:additional}
\begin{definition}
Define a {\em repunit} as a number of the form $R(n) = \frac{10^n-1}{9}$.  It is represented in decimal by $n$ 1's.
\end{definition}
\begin{definition} \label{def:f-10}
Define $f(d,m,n)$ as the number obtained by concatenating $m$ times the digit $d$ in front and after the number $n$, i.e.
$f(2,3,45) = 22245222$. In other words, $f(d,m,n) = dR(m)10^{t+m}+n10^m+dR(m)$ is represented as a concatenation of $dR(m)$, $n$ and $dR(m)$ where $t$ is the number of digits of $n$.
\end{definition}
Then the conjecture in \url{http://oeis.org/A090287} states that
\begin{conjecture}\label{conj:a090287}
$f(d,m,n)$ is composite for all integers $m$, $d$ with $0 < d < 10$ and $m>0$ if and only if $n$ is a palindrome with an even number of digits.
\end{conjecture}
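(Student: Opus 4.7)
My plan is to attack the two directions of this biconditional separately. The ``if'' direction (even-digit palindrome $n$ forces $f(d,m,n)$ composite) is the one I expect to settle first: by construction $f(d,m,n)$ is the concatenation of $dR(m)$, $n$, and $dR(m)$, so when $n$ is itself a palindrome with $2k$ digits, $f(d,m,n)$ is a palindrome of length $2k+2m$. A routine computation of the alternating digit sum shows that every even-length decimal palindrome is divisible by $11$, and since $f(d,m,n)>11$ whenever $m\geq 1$, it is composite. The same trick fails for odd-length palindromes because the middle digit fails to cancel.

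For the converse I would first try computationally: scan small $n$ that are not even-digit palindromes, tabulate $f(d,m,n)$ for $d\in\{1,\dots,9\}$ and modest $m$, and watch for any $n$ at which every entry in an extended search is composite. Since the abstract singles out $n=231$ as the first such $n$, brute force alone cannot close the case; a structural reason is needed. The natural tool is a Sierpi\'nski-style covering argument. Writing $f(d,m,n)=dR(m)(10^{t+m}+1)+n\cdot 10^m$ and reducing modulo a prime $p$ coprime to $10$, the residue of $f(d,m,n)\pmod p$ depends only on $m$ modulo $\mathrm{ord}_p(10)$, because both $10^m$ and $R(m)=(10^m-1)/9$ are periodic in $m$ with that period.

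Carrying this out, for each fixed $d$ I would hunt for a finite list of primes $p_1,\dots,p_k$ (the natural candidates are $3,7,11,13,37,41,73,101,137,\dots$, which all have short orders of $10$) together with residues $r_i$ modulo $\mathrm{ord}_{p_i}(10)$ such that every integer $m\geq 1$ lies in some class $m\equiv r_i$ forcing $p_i\mid f(d,m,n)$. If such a covering exists for every $d\in\{1,\dots,9\}$ then $n$ is a counterexample; for digits $d\in\{2,4,5,6,8\}$ divisibility by $2$ or $5$ already handles every $m$, so the real work is the four cases $d\in\{1,3,7,9\}$. I would verify $n=231$ and $n=420$ by exhibiting explicit coverings, and then try to parametrise: since the covering conditions depend on $n$ only through its residue modulo $\prod_i p_i$ and through its digit length $t$, any $n$ in a suitable arithmetic progression preserving both should inherit the covering and yield infinitely many counterexamples.

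The step I expect to be hardest is the covering construction itself. It must simultaneously handle all relevant $d$, the moduli $\mathrm{ord}_{p_i}(10)$ are largely fixed by arithmetic and may leave uncovered residues, and I must also ensure the resulting arithmetic progression of $n$'s contains numbers that are \emph{not} even-digit palindromes. Turning this trial-and-error search into a systematic procedure is presumably the role of the algorithm promised in the abstract.
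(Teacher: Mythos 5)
Your reading of the statement is the right one: the biconditional is false, so the task splits into proving the ``if'' direction and refuting the ``only if'' direction by exhibiting counterexamples, and your plan for both halves is in substance the paper's. The ``if'' argument (an even-length decimal palindrome has vanishing alternating digit sum, hence is divisible by $11$) is exactly the paper's proof. Your covering-congruence scheme for the converse is the paper's method in different clothing: the paper phrases it as ``$f(1,m,231)$ is a concatenation of copies of $R(6)=111111$ and one of finitely many base cases, each of which shares a factor with $R(6)$,'' which is precisely a covering of the residues of $m$ modulo $6$ by the primes $3,7,11,13,37$ dividing $R(6)$; the finitely many base cases are then checked by explicit factorization, and the search is systematized into the $\gcd(dR(k),w_i)$ test you anticipate at the end. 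Two points of divergence are worth noting. First, your periodicity claim needs a small repair: $R(m)\bmod 3$ has period $3$, not $\mathrm{ord}_3(10)=1$; the clean statement, and the one the paper implicitly uses, is that for any prime $p\mid R(k)$ one has $10^k\equiv 1$ and $R(m+k)\equiv R(m)\pmod p$, hence $f(d,m+k,n)\equiv f(d,m,n)\pmod p$, so period $k$ suffices uniformly (with $k=6$ for $231$). Second, for the infinitude of counterexamples your proposal to propagate a covering along arithmetic progressions of $n$ is workable but delicate, since the digit length $t$ also enters the residues; the paper gets infinitude much more cheaply from the observation that any $n$ with an even number of digits that is divisible by $11$ but is \emph{not} a palindrome already forces $11\mid f(d,m,n)$ for all $m>0$, refuting the ``only if'' direction infinitely often without any covering system at all.
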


One direction of the conjecture is easy to prove.  If $n$ is a palindrome with an even number of digits, then $f(d,m,n)$ is a palindrome with an even number of digits.  The sum of the odd-numbered digits is the same as the sum of the even-numbered digits and the well-known test to determine divisibility by $11$ shows that $f(d,m,n)$ is a multiple of $11$.  Since $f(d,m,n) > 11$, it must be composite.

We now show that the other direction is false by giving a counterexample.

\begin{theorem}\label{thm:231}
$f(d,m,231)$ is composite for all $0 < d < 10$ and $m \geq 0$.  
\end{theorem}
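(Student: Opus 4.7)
The plan is to cover the parameter space by small primes: for every pair $(d,m)$ with $1 \le d \le 9$ and $m \ge 0$ I will exhibit a prime $p \le 13$ that divides $f(d,m,231)$ and is strictly smaller than it. The case $m = 0$ is immediate since $R(0) = 0$ forces $f(d,0,231) = 231 = 3 \cdot 7 \cdot 11$, so throughout what follows I restrict to $m \ge 1$.

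First I would dispatch most of the cases by one-line digit-based observations. The last digit of $f(d,m,231)$ equals $d$, so $d \in \{2,4,5,6,8\}$ immediately yields divisibility by $2$ or $5$. The digit sum is $2dm + 6$, so $3 \mid f$ whenever $3 \mid d$ (covering $d \in \{3,9\}$) or $3 \mid m$. For $d = 7$, both $dR(m) = 7R(m)$ and $231 = 7 \cdot 33$ are multiples of $7$, so $7 \mid f$. Finally, since $10 \equiv -1 \pmod{11}$, $11 \mid R(m)$ for even $m$, and $11 \mid 231$, we get $11 \mid f$ for every even $m \ge 2$. After these reductions the only surviving pairs are $d = 1$ with $m$ odd and $3 \nmid m$, i.e., $m \equiv 1$ or $5 \pmod{6}$.

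The hard part will be this last family, and my approach is to show $13 \mid f(1,m,231)$ for $m \equiv 1, 5 \pmod{6}$. Because $10$ has order $6$ modulo $13$, the residues of $R(m)$ and $10^m$ modulo $13$ are periodic with period $6$, so the residue of $f(1,m,231) = R(m) \cdot 10^{m+3} + 231 \cdot 10^m + R(m)$ modulo $13$ depends only on $m \bmod 6$. Using $9^{-1} \equiv 3 \pmod{13}$ to evaluate $R(m) = (10^m-1)/9$ and $231 \equiv 10 \pmod{13}$, verifying the claim will reduce to a six-line table that returns $0 \pmod{13}$ precisely in the two residue classes $m \equiv 1, 5 \pmod{6}$. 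Since $f(d,m,231) \ge f(1,1,231) = 12311$ for every $m \ge 1$, the prime divisor furnished in each case is strictly smaller than $f(d,m,231)$, completing the proof.
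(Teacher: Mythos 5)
Your proposal is correct; I checked the residue computation and $13$ does divide $f(1,m,231)$ exactly when $m \equiv 1, 5 \pmod 6$, which together with your digit-sum and parity reductions covers every case (including $m=0$, which the paper leaves implicit). The overall strategy is the same as the paper's — handle $d \ne 1$ by small primes dividing $231 = 3\cdot 7\cdot 11$ or by the last digit, handle $d=1$ with $m$ even via $11$, and then exploit period-$6$ behaviour for the remaining odd $m$ — but your execution of the last step is genuinely different. The paper writes $f(1,m,231)$ for odd $m$ as a concatenation of copies of $111111 = R(6)$ with one of the three "core" numbers $12311$, $111231111$, $1111123111111$, and reads off divisibility by $3$ or $13$ from their explicit integer factorizations together with the observation that a concatenation of two multiples of $p$ is a multiple of $p$. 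You instead compute $f(1,m,231) = R(m)\,10^{m+3} + 231\cdot 10^{m} + R(m) \bmod 13$ directly, using that $10$ has order $6$ modulo $13$ and $9^{-1} \equiv 3$, and you peel off the $m \equiv 3 \pmod 6$ subcase earlier via the digit sum. The two arguments rest on the same fact (the primes $3$ and $13$ divide $R(6)$, so everything is periodic with period $6$ in $m$), but yours avoids factoring any $13$-digit integers and makes the periodicity explicit, which generalizes more transparently to the paper's later algorithmic test with other moduli $k$; the paper's version has the advantage of requiring only integer factorizations that a reader can verify by hand or by table lookup, with no modular inverse bookkeeping. The one thing you still owe is the actual two-line (or six-line) table of residues — as written it is a plan rather than a completed verification — but filling it in is mechanical and it does come out to $0$ in the two classes you need.
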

\begin{proof}
We will make frequent use of the following simple observation.  If $n$ is a concatenation of numbers $m$ and $k$, then
$p|n$ if $p|m$ and $p|k$.  This is due to the fact that $n = m10^i + k$ for some integer $i$.  Next note that $f(d,m,n)$ is a concatenation of multiple copies of $d$ and $n$.

Clearly if $d$ is even, then $f(d,m,231)$ is even and since $f(d,m,231) > 2$, it is composite.
Note that $231 = 3\cdot 7 \cdot 11$.  Therefore the numers $f(3,m,231) > 3$ and $f(9,m,231) > 9$ are divisible by $3$ by the observation above and thus composite.  Similarly the number $f(7,m,231) > 7$ is divisible by $7$.  $f(5,m,231) > 5$ has $5$ as the last digit and is thus divisible by $5$.  As for $f(1,m,231)$ we consider $2$ cases.  If $m$ is even, then $R(m)$ is divisible by $11$ (as it is a concatenation of multiple $11$'s).  Since $231$ is also divisible by $11$, $f(1,m,231)$ is divisible by $11$.

If $m$ is odd, then $f(1,m,231)$ is a concatenation of (possibly multiple copies of) $111111$ and one of the following numbers:
$12311$, $111231111$ and $1111123111111$.
The factorization of these numbers are: 
\begin{itemize}
\item $111111 = 3\cdot 7\cdot 11\cdot 13\cdot 37$
\item $12311 = 13 \cdot 947$
\item $111231111 = 3\cdot 19 \cdot 193 \cdot 10111$
\item $1111123111111 = 13\cdot 8231 \cdot 10384037$
\end{itemize}
Thus $111111$ is divisible by both $3$ and $13$.  The numbers $12311$, $111231111$ and $1111123111111$ are divisible by either
$3$ or $13$. By the observation above this implies that for odd $m$ $f(1,m,231)$ is divisible by either $3$ or $13$.  Since
$f(1,m,231)$ has at least $3$ digits, it is composite.
\end{proof}

\begin{theorem}
$f(d,m,420)$ is composite for all $0 < d < 10$ and $m \geq 0$.  
\end{theorem}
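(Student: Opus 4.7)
My plan is to follow the structure of the proof of Theorem \ref{thm:231}, separating the analysis by the digit $d$ and exploiting the factorization $420 = 2^2 \cdot 3 \cdot 5 \cdot 7$ via the concatenation observation. For $d$ even, $f(d,m,420)$ is even and exceeds $2$, hence composite. For $d = 5$, it is divisible by $5$ (the last digit is $5$ for $m \geq 1$, and $f = 420$ for $m = 0$). For $d \in \{3, 9\}$, both $420$ and the block $dR(m)$ are divisible by $3$, so the observation gives $3 \mid f(d,m,420)$. For $d = 7$, both $420 = 7 \cdot 60$ and $7R(m)$ are divisible by $7$, giving $7 \mid f(7,m,420)$. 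In each case $f$ strictly exceeds the relevant prime, so it is composite.

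The remaining case $d = 1$ is the interesting one and parallels the odd-$m$ analysis of Theorem \ref{thm:231}. Writing $m = 6k + r$ with $r \in \{0, 1, 2, 3, 4, 5\}$, the number $f(1, m, 420)$ is a concatenation of (possibly multiple copies of) $111111$ together with one of the six central pieces
\[
420,\ 14201,\ 1142011,\ 111420111,\ 11114201111,\ 1111142011111,
\]
corresponding to $r = 0, 1, 2, 3, 4, 5$ respectively. Since $111111 = 3 \cdot 7 \cdot 11 \cdot 13 \cdot 37$ is divisible by each of $3$, $11$, and $13$, the concatenation observation reduces the problem to showing that each central piece is divisible by one of $\{3, 11, 13\}$. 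A finite verification---using the digit-sum test for $3$, the alternating-digit-sum test for $11$, and direct division for $13$---shows that $3$ divides $420$ and $111420111$, that $11$ divides $14201 = 11 \cdot 1291$ and $1111142011111$, and that $13$ divides $1142011 = 13 \cdot 87847$ and $11114201111 = 13 \cdot 854938547$. Hence for every $m \geq 0$ some prime $p \in \{3, 11, 13\}$ divides $f(1, m, 420)$, and since $f > p$ in every case, $f(1,m,420)$ is composite.

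The main obstacle is simply performing the factorizations by $13$, which are straightforward finite checks but must be carried out explicitly; the coverage of all residues $r \pmod 6$ by the three primes $\{3, 11, 13\}$ is what makes the argument go through. No new conceptual ingredient is required beyond the method used for $n = 231$.
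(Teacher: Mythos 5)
Your proposal is correct and is essentially the paper's own argument: dispatch $1<d<10$ using the factorization $420=2^2\cdot3\cdot5\cdot7$, then for $d=1$ reduce modulo $6$ to the six central pieces and check that each shares one of the factors $3$, $11$, $13$ with $111111$. Your divisibility claims all agree with the paper's factorizations (e.g.\ $87847=107\cdot821$), so nothing is missing.
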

\begin{proof}
Since $420 = 2^2\cdot 3\cdot 5 \cdot 7$, similar to the proof of Theorem \ref{thm:231}, $f(d,m,420)$ is composite for all $m \geq 0$ and $1 < d < 10$.
$f(1,m,420)$ is a concatenation of (possibly multiple copies of) $111111$ and one of the following numbers:
$420$, $14201$, $1142011$, $111420111$, $11114201111$, $1111142011111$.
The factorizations are:
\begin{itemize}
\item $14201 = 11\cdot 1291$
\item $1142011 = 13\cdot 107 \cdot 821$
\item $111420111 = 3\cdot 11 \cdot 317 \cdot 10651$
\item $11114201111 = 13\cdot 179 \cdot 293 \cdot 16301$
\item $1111142011111 = 11\cdot 31\cdot 337 \cdot 9669083$
\end{itemize}
This implies that $f(1,m,420)$ is divisible by either $3$, $11$ or $13$.
\end{proof}

\begin{theorem}
$f(d,m,759)$ is composite for all $0 < d < 10$ and $m \geq 0$.  
\end{theorem}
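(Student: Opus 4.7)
The plan is to follow the template of the proofs of Theorem \ref{thm:231} and the preceding theorem, leaning throughout on the concatenation observation. First I would factor $759 = 3 \cdot 11 \cdot 23$. The cases $d \in \{2, 3, 4, 5, 6, 8, 9\}$ are immediate from this factorization: for $d$ even the number $f(d, m, 759)$ ends in an even digit; for $d = 5$ it ends in $5$; for $d \in \{3, 9\}$ both $dR(m)$ and $759$ are divisible by $3$, so the concatenation is divisible by $3$. In each case $f(d, m, 759)$ exceeds the relevant small prime and is therefore composite.

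The substantive cases are $d = 1$ and $d = 7$. Following the template used for $n = 231$, I would write $m = 6q + r$ with $r \in \{0, 1, 2, 3, 4, 5\}$ and view $f(d, m, 759)$ as the concatenation of $q$ copies of $dR(6)$, a core $f(d, r, 759)$, and $q$ further copies of $dR(6)$. Since $R(6) = 3 \cdot 7 \cdot 11 \cdot 13 \cdot 37$, the set of primes dividing $dR(6)$ contains $\{3, 7, 11, 13, 37\}$ for both $d = 1$ and $d = 7$. By the concatenation observation it then suffices to exhibit, for each of the twelve pairs $(d, r) \in \{1, 7\} \times \{0, 1, 2, 3, 4, 5\}$, a prime in this set that divides the core $f(d, r, 759)$.

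The remaining work is to factor (or at least display a small prime divisor of) each of the twelve cores, the largest being $f(7, 5, 759) = 7777775977777$. The case $r = 0$ is covered uniformly by $f(d, 0, 759) = 759 = 3 \cdot 11 \cdot 23$, which already shares the primes $3$ and $11$ with the target set. For the other eleven cores, a digit-sum test handles divisibility by $3$, an alternating-digit-sum test handles $11$, and direct trial division against $7$, $13$, and $37$ covers the remaining primes; I expect each core to be caught by one of these tests. The main obstacle is therefore purely the mechanical verification of all twelve cases, analogous to the tables displayed in the preceding two proofs, with no new conceptual ingredient required.
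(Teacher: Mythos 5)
Your proposal is correct and follows essentially the same route as the paper: reduce to $d\in\{1,7\}$ via the factorization $759=3\cdot 11\cdot 23$, split $m$ modulo $6$, and check that each core shares a prime factor with $dR(6)$ (indeed each of the twelve cores is divisible by one of $3$, $7$, $11$, $13$). The only cosmetic difference is that the paper first dispatches all even $m$ by divisibility by $11$ (since $11\mid 759$), leaving only the three odd residues mod $6$ per digit to factor explicitly.
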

\begin{proof}
Since $759 = 3\cdot 11 \cdot 23$, similar to the proofs above, we only need to check $d=1$ and $d=7$.  Again, since $759$ is divisible by $11$, we only need to check for odd $m$.
$f(1,m,759)$ is a concatenation of (possibly multiple copies of) $111111$ and one of the following numbers:
$17591$, $111759111$, $1111175911111$.
The factorizations are:
\begin{itemize}
\item $17591 = 7^2\cdot 359$
\item $111759111 = 3^2\cdot 127 \cdot 97777$
\item $1111175911111 = 7\cdot 12373 \cdot 12829501$
\end{itemize}
This implies that $f(1,m,420)$ is divisible by either $3$ or $7$.
As for $f(7,m,759)$, it is a concatenation of (possibly multiple copies of) $777777$ and one of the following numbers:
$77597$, $777759777$, $7777775977777$.
The factorizations are:
\begin{itemize}
\item $77597 = 13\cdot 47\cdot 127$
\item $777759777 = 3^2\cdot 151 \cdot 572303$
\item $7777775977777 = 13\cdot 1289\cdot 5189\cdot 89449$
\end{itemize}
Thus $f(7,m,759)$ is divisible by either $3$ or $13$.
\end{proof}

\begin{theorem}
$f(d,m,6363)$ is composite for all $0 < d < 10$ and $m \geq 0$.  
\end{theorem}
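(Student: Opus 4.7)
The plan is to follow the template of the three previous theorems, modified in one essential way: the period-$6$ argument used there does not work for $n = 6363$, and one must lift to period $12$.

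First, the factorization $6363 = 3^2 \cdot 7 \cdot 101$ disposes of every case $d \neq 1$ exactly as in Theorem \ref{thm:231}: $d$ even gives an even number; $d = 5$ gives a number ending in $5$; $d \in \{3, 9\}$ gives a digit sum divisible by $3$; and $d = 7$ gives a number divisible by $7$ (using $7 \mid 7R(m)$ and $7 \mid 6363$).

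The substantive case is $d = 1$. The period-$6$ template breaks here, as one sees by computing $f(1, 2, 6363) = 11636311$ and observing that it shares no prime factor with $R(6) = 3 \cdot 7 \cdot 11 \cdot 13 \cdot 37$. The fix is to use the longer period
$$R(12) = 111111111111 = 3 \cdot 7 \cdot 11 \cdot 13 \cdot 37 \cdot 101 \cdot 9901;$$
the new prime $101$ is the crucial new resource, motivated by $101 \mid 6363$ and by the fact that $10$ has order $4$ modulo $101$. Writing $m = 12q + r$ with $0 \leq r \leq 11$, the number $f(1,m,6363)$ is a concatenation of (possibly multiple copies of) $R(12)$ together with the center piece $f(1,r,6363)$, so by the concatenation-divisibility observation from Theorem \ref{thm:231} it suffices to show that, for each such $r$, the center piece $f(1, r, 6363)$ shares a prime factor with $R(12)$.

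The main computational step is then to exhibit, for each $r \in \{0,1,\ldots,11\}$, a prime from the set $\{3, 7, 13, 101\}$ (each of which divides $R(12)$) that divides $f(1, r, 6363)$. A short modular analysis organizes the coverage cleanly: $3$ handles $r \in \{0, 3, 6, 9\}$ via the digit sum $2r + 18$; $101$ handles every even $r$ (splitting into $r \equiv 0 \pmod 4$, where $101 \mid R(r)$, and $r \equiv 2 \pmod 4$, where $10^{r+4} \equiv -1 \pmod{101}$); $7$ handles $r \in \{0, 5, 6, 11\}$ via $7 \mid 6363$ together with the order-$6$ behavior of $10$ modulo $7$; and $13$ handles $r \in \{1, 4, 7, 10\}$ by a similar residue computation. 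The union of these four residue sets is all of $\{0, 1, \ldots, 11\}$, which completes the argument. In the style of the earlier theorems, the actual write-up would exhibit the twelve center-piece factorizations explicitly; the only obstacle is the bookkeeping of twelve cases rather than six, and I would expect the final presentation to be roughly twice as long as the proof of Theorem \ref{thm:231}.
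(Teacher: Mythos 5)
Your proposal is correct and follows essentially the same route as the paper: after disposing of $d\neq 1$ via $6363=3^2\cdot 7\cdot 101$, both arguments reduce $d=1$ to the twelve residues of $m$ modulo $12$ and show each center piece shares one of the factors $3$, $7$, $13$, $101$ with $R(12)$ (the paper merely splits off the $m\equiv 0 \pmod 4$ case first, using $1111=11\cdot 101$, which your treatment of even $r$ via $101$ subsumes). Your explicit residue computations are a slightly more verifiable presentation of the same coverage the paper asserts by inspecting factorizations.
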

\begin{proof}
Since $6363 = 3^2\cdot 7 \cdot 101$, similar to the proofs above, we only need to check $d=1$.  Since $1111=11\cdot 101$,
$f(1,m,6363)$ is divisible by $101$ for $m$ a multiple of $4$.  For $m$ not a multiple of $4$,  
$f(1,m,6363)$ is a concatenation of (possibly multiple copies of) $R(12) = 3\cdot 7\cdot 11\cdot 13\cdot 37\cdot 101\cdot 9901$ and $f(1,m,6363)$ for some $m \in \{0,1,2,3,5,6,7,9,10,11\}$.  Looking at the factorizations of these numbers shows that
for all $m\geq 0$, $f(1,m,6363)$ is divisible by one of the following prime factors: $3$, $7$, $13$, $101$.  
\end{proof}

\begin{theorem}
$f(d,m,10815)$ is composite for all $0 < d < 10$ and $m \geq 0$.  
\end{theorem}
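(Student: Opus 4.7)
My plan is to follow the same template as the four preceding theorems. First I would factor $10815 = 3 \cdot 5 \cdot 7 \cdot 103$. Because $10815$ is divisible by $3$, $5$, and $7$, the cases $d \in \{2,3,4,5,6,7,8,9\}$ fall immediately to the arguments already used in Theorem~\ref{thm:231}: even $d$ makes $f(d,m,10815)$ even; $d = 5$ makes the last digit $5$; $d \in \{3, 9\}$ makes the digit sum a multiple of $3$; and for $d = 7$, both $7R(m)$ and $10815$ are divisible by $7$, so the entire concatenation is too. Only $d = 1$ requires real work.

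For $d = 1$, I would imitate the argument used for $6363$ and split on $m \bmod 12$. Using the factorization
\[
R(12) = 3 \cdot 7 \cdot 11 \cdot 13 \cdot 37 \cdot 101 \cdot 9901,
\]
write $m = 12q + r$ with $0 \le r < 12$. Then $f(1, m, 10815)$ is a concatenation of $q$ copies of $R(12)$, followed by $f(1, r, 10815)$, followed by $q$ more copies of $R(12)$. By the observation on divisibility of concatenations, any prime dividing both $R(12)$ and $f(1, r, 10815)$ divides $f(1, 12q+r, 10815)$. It therefore suffices to exhibit, for each residue $r \in \{0, 1, \dots, 11\}$, a prime factor of $R(12)$ that divides $f(1, r, 10815)$.

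The twelve base cases split neatly by small-prime arithmetic. The residues $r \in \{0, 3, 6, 9\}$ are handled by $3$ via the digit sum. Since $10 \equiv -1 \pmod{11}$ and $10815 \equiv 2 \pmod{11}$, a short direct calculation gives $11 \mid f(1, r, 10815)$ for every odd $r$, covering $\{1,3,5,7,9,11\}$. The order of $10$ modulo $7$ is $6$, and since $7 \mid 10815$, one checks $7 \mid f(1, r, 10815)$ for $r \in \{0, 4, 6, 10\}$. Finally, the order of $10$ modulo $37$ is $3$, and since $10815 \equiv 11 \equiv R(2) \pmod{37}$, one verifies $37 \mid f(1, r, 10815)$ for $r \in \{2, 5, 8, 11\}$. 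Together the primes $\{3, 7, 11, 37\}$, all divisors of $R(12)$, cover every residue class.

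The only real obstacle is confirming that the coverage above is genuinely complete; this is a routine check of twelve shortish numbers, most easily done either by the periodic modular arithmetic sketched above or by exhibiting explicit factorizations of the twelve numbers $f(1, 0, 10815), \dots, f(1, 11, 10815)$ in the style of the earlier proofs. Were any residue to slip through, the fallback would be to split on $m \bmod 24$ or $m \bmod 36$ and invoke further prime factors of $R(24)$ or $R(36)$ (such as $239$, $4649$, $9091$, $9901$, or $99990001$), escalating the modulus as the earlier proofs implicitly do when their initial choice is insufficient.
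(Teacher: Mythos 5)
Your proof is correct and follows essentially the same strategy as the paper: reduce $m$ modulo a repunit period, and show each base case $f(1,r,10815)$ shares one of the primes $3$, $7$, $11$, $37$ with the repunit. The only difference is that you work modulo $12$ with $R(12)$, whereas the paper works modulo $6$ with $R(6)=111111=3\cdot 7\cdot 11\cdot 13\cdot 37$ (which already contains all four covering primes, so six base cases suffice), and the paper verifies the base cases by explicit factorization rather than by your periodic congruence arguments.
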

\begin{proof}
Since $10185 = 3\cdot 5\cdot 7 \cdot 103$, similar to the proof of Theorem \ref{thm:231}, the number $f(d,m,10815)$ is composite for all $m \geq 0$ and $1 < d < 10$.
$f(1,m,231)$ is a concatenation of (possibly multiple copies of) $111111$ and one of the following numbers:
$10815$, $1108151$, $111081511$, $11110815111$, $1111108151111$, $111111081511111$.
The factorizations are:
\begin{itemize}
\item $1108151 = 11\cdot 100741$
\item $111081511 = 37\cdot 67 \cdot 44809$
\item $11110815111 = 3 \cdot 11\cdot 157 \cdot 607 \cdot 3533$
\item $1111108151111 = 7\cdot 158729735873$
\item $111111081511111 = 11\cdot 37 \cdot 661\cdot 413010893$
\end{itemize}
This implies that $f(1,m,10815)$ is divisible by either $7$, $11$ or $37$.
\end{proof}

\begin{theorem} \label{thm:composite}
If $n$ has an even number of digits and $n$ is divisible by $11$, then $f(d,m,n)$ is composite for all $0 \leq d < 10$ and $m > 0$. 
\end{theorem}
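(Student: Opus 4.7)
The plan is to show that $11 \mid f(d,m,n)$ under the stated hypotheses; since $m > 0$ and $n \ge 11$ give $f(d,m,n) \ge n\,10^m \ge 110 > 11$, this divisibility forces $f(d,m,n)$ to be composite. Divisibility by $11$ will be obtained by a direct congruence computation modulo $11$ applied to the closed-form expression $f(d,m,n) = dR(m)\,10^{t+m} + n\,10^m + dR(m)$ from Definition \ref{def:f-10}, where $t$ denotes the (even) number of digits of $n$.

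The key steps, in order, are as follows. First, record that $10 \equiv -1 \pmod{11}$, so $10^k \equiv (-1)^k \pmod{11}$ for every $k \ge 0$. Second, from $9R(m) = 10^m - 1$ deduce that $R(m) \equiv 0 \pmod{11}$ when $m$ is even and $R(m) \equiv 1 \pmod{11}$ when $m$ is odd. Third, use the hypothesis that $t$ is even to conclude $10^{t+m} \equiv 10^m \equiv (-1)^m \pmod{11}$, which yields
\[
f(d,m,n) \;\equiv\; (-1)^m\bigl(dR(m) + n\bigr) + dR(m) \pmod{11}.
\]
Finally, split on the parity of $m$. If $m$ is even, then $R(m) \equiv 0$ and the right-hand side reduces to $n$, which vanishes mod $11$ by hypothesis on $n$. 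If $m$ is odd, then $R(m) \equiv 1$ and the right-hand side reduces to $-(d + n) + d = -n$, which again vanishes mod $11$. In either case $11 \mid f(d,m,n)$, as desired.

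There is no substantial obstacle; the argument is essentially mechanical once $f(d,m,n)$ is reduced modulo $11$. The only delicate point is the bookkeeping of parities: the evenness of $t$ is precisely what forces $10^{t+m}$ and $10^m$ to contribute the same sign, so the two copies of $dR(m)$ either both vanish (when $m$ is even) or cancel each other (when $m$ is odd). This uniform handling of both parities by a single hypothesis on the digit count of $n$ is the conceptual content of the theorem, and it cleanly generalises the ``$R(m)$ is a concatenation of $11$'s'' observation that was used in the easy direction of Conjecture \ref{conj:a090287}.
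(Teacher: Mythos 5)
Your proof is correct. It reaches the same conclusion ($11 \mid f(d,m,n)$, hence compositeness since $f(d,m,n) \geq n\,10^m > 11$) but by a different route than the paper: the paper reduces to $d \in \{1,3,7\}$, decomposes $f(d,m,n)$ as a concatenation of blocks `$dd$' (each divisible by $11$) together with either $n$ or the block $n_2 = d10^{q+1} + 10n + d$ (shown divisible by $11$ via the alternating-digit-sum test and the evenness of $q$), and then invokes the ``concatenation of multiples of $p$ is a multiple of $p$'' observation used throughout the paper. Your version instead works directly with the closed form from Definition~\ref{def:f-10} and a single congruence modulo $11$, using $10 \equiv -1$, $R(m) \equiv 0$ or $1$ according to the parity of $m$, and the evenness of $t$ to align the signs; the parity split at the end is clean and the computation checks out (for odd $m$, $9R(m) \equiv -2$ gives $R(m) \equiv -2\cdot 5 \equiv 1 \pmod{11}$ as you assert). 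What your approach buys is uniformity — it handles all $0 \leq d < 10$ at once, including $d = 0$ and the composite digits, with no case reduction and no auxiliary block $n_2$ — and it adapts immediately to the base-$b$ setting of Theorem~\ref{thm:baseb} by working modulo a common prime divisor of $n$ and $b+1$. What the paper's approach buys is consistency with the concatenation machinery used in all the other proofs (Theorems on $231$, $420$, etc.), which is the machinery that drives the computational search described after the theorem.
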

\begin{proof}
This can be proved with the same argument as the one above that was used to prove one direction of Conjecture \ref{conj:a090287}, but let us prove this with the notation we have defined so far. Similar to the proofs above, we only need to check $d=1$, $d=3$ and $d=7$. Let $q$ be the number of digits of $n$.
Define $n_2$ as the concatenation of $d$, $n$ and $d$, i.e. $n_2 = d10^{q+1}+10n+d$.
$f(d,m,n)$ is a concatenation of (possibly multiple copies of) $11d$ (i.e. `$dd$') and either $n$ or $n_2$.  The divisibility test for $11$ shows that for even $q$, the number $d10^{q+1}+d$ is divisible by $11$.  
This implies that $n_2$ is divisible by $11$ and thus $f(d,m,n)$ is divisible by $11$.
\end{proof}

The reasoning in the results above allows us to derive the following test to provide a sufficient condition for when $f(d,m,n)$ for fixed $n$, $d$ is composite for all $m > 1$:  

\begin{enumerate}
\item Pick $1< k \leq k_{\max}$.   
\item For each $0 \leq i < k$, let $w_i$ be the concatenation of $dR(i)$, $n$ and $dR(i)$.  
\item If $\mbox{gcd}(dR(k), w_i) > 1$ for each $i$, then $f(d,m,n)$ is composite for all $m>1$.
\end{enumerate}
with the convention that the concatenation of $dR(0)$, $n$ and $dR(0)$ is equal to $n$. Implementing this test in a computer program, we found the following values of $n$ which have an odd number of digits or are not divisible by $11$ such that $a(n) = 0$: $231$, $420$, $759$, $2814$, $6363$, $9177$, $10815$, $12663$, $15666$, $18669$, $19362$, $21672$, $24675$, ....
We found $4919$ such numbers for $n\leq 10^7$.  In addition, we found that all of these $4919$ numbers are multiples of $3$ and satisfy the test for all $d$ (that need to be checked) with $k=6$ except for $6363$, $488649$, $753774$ which needed $k=12$, the numbers $921333$ and $8872668$ which needed $k=8$ and $5391498$ which needed $k=30$. The minimal $k$ for each digit $d$ may depend on $d$, but it is clear that the least common multiple of all these $k$'s will work for all $d$.

Note that the proof in Theorem \ref{thm:231} relies on $f(1,m,231)$ sharing a prime factor with some repunit $R(k)$ for $0\leq m < k$. In this case it is true for $R(6) = 111111$.  This may suggest that we focus on repunits with many prime factors such as $R(k)$ for $k=6, 12, 15, 16, 18, 20$, etc.

\begin{conjecture}\label{conj:a}
If $n$ is not a multiple of $11$ with an even number of digits and $a(n) = 0$, then $n$ is a multiple of $3$.
\end{conjecture}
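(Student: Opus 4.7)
The plan is to pass to the contrapositive: assume $n$ is coprime to $3$ and is not a multiple of $11$ with an even digit count, and try to exhibit $d \in \{1,3,7,9\}$ and $m \geq 0$ with $f(d,m,n)$ prime. The digits $d \in \{2,4,5,6,8\}$ are excluded immediately by divisibility by $2$ or $5$, so the hypothesis $a(n)=0$ genuinely constrains the four remaining choices of $d$. I would then isolate $d=3$ (the case $d=9$ being symmetric): since the digit sum of $f(3,m,n)$ is $6m + s(n) \equiv s(n) \pmod 3$, when $3 \nmid n$ the prime $3$ never divides $f(3,m,n)$, so any proof of universal compositeness must proceed through primes $p \notin \{2,3,5\}$ with finite $\mathrm{ord}_p(10)$.

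Next I would set up the arithmetic precisely. Writing $t$ for the digit length of $n$, one has
\[
9\,f(d,m,n) = d(10^m-1)(10^{m+t}+1) + 9n\cdot 10^m,
\]
which modulo any prime $p \nmid 30$ becomes a quadratic in $x := 10^m$. Hence each such $p$ divides $f(d,m,n)$ for at most two residue classes of $m$ modulo $k := \mathrm{ord}_p(10)$. The algorithmic test of Section~\ref{sec:additional} fits exactly this framework: it searches for a covering of $\mathbb{Z}_{\geq 0}$ by such classes drawn from the prime divisors of $d\,R(k)$. The conjecture therefore reduces to showing that, under the stated hypotheses on $n$, no union of such classes covers the nonnegative integers for $d=3$ (equivalently $d=9$), and then producing an actual prime $f(3,m,n)$ or $f(9,m,n)$ in some uncovered class.

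The main obstacle is the last step: ruling out a covering system does not by itself produce a prime, and the gap between ``no small prime always divides $f(3,m,n)$'' and ``some $f(3,m,n)$ is prime'' is the content of a Bunyakovsky/Schinzel-type statement for the parameterized family $\{f(3,m,n)\}_{m\geq 0}$, for which unconditional techniques are unavailable. I would therefore expect a realistic proof to be conditional on a Bateman--Horn-type hypothesis, or else to establish only the combinatorial half, that the covering itself cannot exist unless $3 \mid n$, treating primality in an uncovered residue class as a heuristic input. Even the covering-impossibility half looks delicate, since it must exclude \emph{all} finite subsets of the infinite supply of primes with $\mathrm{ord}_p(10)<\infty$; my expectation is that it would proceed by a pigeonhole argument on the at-most-two quadratic roots $x \equiv 10^m \pmod p$, combined with a case analysis modulo the first few values of $k$ where $R(k)$ has many small prime divisors (e.g.\ $k = 6, 12, 15, 16, 18, 20$, as already noted by the author).
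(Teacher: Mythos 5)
This statement is labelled as a \emph{conjecture} in the paper, and the paper offers no proof of it: the only support given is computational (probabilistic primality tests verifying it for $n\leq 37443$). So there is no ``paper proof'' to compare against, and your proposal --- by its own admission --- does not close the gap either. Your structural analysis is sound as far as it goes: the reduction to $d\in\{1,3,7,9\}$, the digit-sum observation that $3\nmid f(3,m,n)$ when $3\nmid n$, and the identity $9f(d,m,n)=d\,10^t x^2+(d-d\,10^t+9n)x-d$ with $x=10^m$, showing each prime $p\nmid 10d$ hits at most two residue classes of $m$ modulo $\mathrm{ord}_p(10)$, are all correct and correctly identify why the author's covering-style test tends to require $3\mid n$.

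The genuine gaps are the two you name, and they are fatal to this being a proof rather than a heuristic. First, the contrapositive requires exhibiting an actual prime $f(d,m,n)$, and no unconditional technique produces primes in such a parameterized family; conditioning on Bateman--Horn or Bunyakovsky changes the statement being proved. Second, and worth stating more sharply than you do: even the ``combinatorial half'' would not suffice, because the hypothesis $a(n)=0$ does not assert the existence of a covering system of small primes --- it only asserts that every $f(d,m,n)$ is composite, which could in principle happen without any finite set of primes covering all $m$. Ruling out coverings therefore only eliminates one mechanism for $a(n)=0$; it does not establish the conjecture. What you have is a correct explanation of why the conjecture is plausible and why it is hard, which matches the paper's own stance of leaving it open.
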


There are other values of $n$ such as $366$, $1407$ for which $a(n)$ is still unknown. An interesting case is 
$a(1414) = f(3,1207,1414)$ which has presumably\footnote{We use the word ''presumably'' since we use a probabilistic primality test which even though it guarantees the number is composite when the test determines it to be so, there is a very small chance that a number determined by the test to be prime can be composite.} $2418$ digits.

\section{Prepending identical digits and appending identical digits}

Consider the following variations of the sequence $a(n)$. 
\begin{definition}
$b(n)$ is the smallest prime obtained by appending $n$ to a number with identical digits, or $0$ if no such prime exists (\url{https://oeis.org/A256480}).
$c(n)$ is the smallest prime obtained by appending a number with identical digits to $n$, or $0$ if no such prime exists (\url{https://oeis.org/A256481}).
\end{definition}

Clearly $b(n) = 0$ if $n$ is even or divisible by $5$.  Furthermore $c(n)$ coincides with OEIS A030665 (\url{https://oeis.org/A030665}) for $n < 20$.
The sufficient condition for testing whether $a(n) = 0$ described above has corresponding versions for $b(n)$ and $c(n)$.  For $b(n)$ we have:

\begin{enumerate}
\item Pick $1\leq d \leq 9$ such that the prime factors of $d$ do not divide $n$.
\item Pick $1< k \leq k_{\max}$.   
\item For each $0 \leq i < k$, let $w_i$ be the concatenation of $dR(i)$ and $n$.  
\item If for each $d$, $\mbox{gcd}(dR(k), w_i) > 1$ for each $i$, then $b(n) = 0$.
\end{enumerate}

For $c(n)$ we have:

\begin{enumerate}
\item Pick $d$ from the set $\{1, 3, 7, 9\}$ such that the prime factors of $d$ do not divide $n$.
\item Pick $1< k \leq k_{\max}$.   
\item For each $0 \leq i < k$, let $w_i$ be the concatenation of $n$ and $dR(i)$.  
\item If for each $d$, $\mbox{gcd}(dR(k), w_i) > 1$ for each $i$, then $c(n) = 0$.
\end{enumerate}

\begin{conjecture} \label{conj:b}
$b(n) = 0$ if and only if $n$ is even or $n$ is divisible by $5$.
\end{conjecture}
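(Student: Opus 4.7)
The forward direction is immediate: if $n$ ends in $0, 2, 4, 5, 6,$ or $8$, every candidate $dR(m) \cdot 10^t + n$ (with $t$ the digit count of $n$) shares that last digit and is therefore divisible by $2$ or $5$; since it exceeds $10$ it must be composite, giving $b(n) = 0$.

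For the converse my plan is to first recast each of the nine candidate sequences in a Sierpinski-like form. Multiplying by $9$ gives $9\bigl(dR(m)\cdot 10^t + n\bigr) = d \cdot 10^t \cdot 10^m + (9n - d\cdot 10^t)$, so divisibility of $dR(m)\cdot 10^t + n$ by a prime $p \notin \{2, 3, 5\}$ coprime to $d$ is governed by a congruence of the shape $10^m \equiv c(d,n,t) \pmod{p}$, which cuts out either no $m$ or a single arithmetic progression of $m$ modulo $\mathrm{ord}_p(10)$. The first step would be to show that no finite system of such progressions, ranging over the admissible digits $d$ (those whose prime factors do not divide $n$), can cover every pair $(d,m)$ with $m \geq 1$ unless the terminal-digit obstruction already applies. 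The intuition is that there are always enough admissible choices of $d$ to work with, and the multiplicative orders of $10$ modulo the relevant small primes grow too quickly for a single covering to succeed simultaneously across all nine sequences; one would make this precise by a case analysis on the possible periods involved, much in the spirit of the analysis underlying the algorithm presented in this section, but used contrapositively.

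The principal obstacle is the second step: converting the absence of a covering obstruction into the actual existence of a prime. The sequences $\{d \cdot 10^t \cdot 10^m + c\}_{m \geq 1}$ are exponentially sparse, and no unconditional method is known for producing primes in such sequences --- this is the base-$10$ analogue of the classical Sierpinski/Riesel problem, which is still open. Consequently, a complete unconditional proof of Conjecture \ref{conj:b} appears out of reach. A realistic two-part plan is therefore to prove the statement conditionally on Schinzel's Hypothesis H or the Bateman--Horn conjecture (combining the covering analysis above with the resulting heuristic prime-count asymptotic to extract infinitely many primes in at least one of the nine sequences), and then to unconditionally verify the conjecture for $n$ up to a large bound using the sufficient-condition algorithm just stated, ruling out covering obstructions case by case.
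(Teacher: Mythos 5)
The statement you were asked to prove is labeled a \emph{conjecture} in the paper, and the paper does not prove it: the only part the author establishes is the easy direction, dismissed in one line (``Clearly $b(n)=0$ if $n$ is even or divisible by $5$''), while the converse is supported only by a probabilistic primality computation for $n \le 10^7$. Your treatment of the easy direction is correct and matches the paper's intent: the last digit of $dR(m)\cdot 10^t + n$ is the last digit of $n$, so every candidate is divisible by $2$ or $5$ and exceeds $10$, hence composite. Your diagnosis of the converse is also accurate: after multiplying by $9$ the candidates become $d\cdot 10^{t+m} + (9n - d\cdot 10^t)$, exponentially sparse sequences in which no unconditional method can be expected to produce a prime, so a full proof is out of reach; the paper makes no attempt at one.

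One caution about the first step of your plan: ruling out covering obstructions is itself nontrivial and should not be presented as a routine case analysis on periods. The paper's central point is that for the two-sided sequence $a(n)$ such coverings \emph{do} exist for unexpected values (e.g.\ $n=231$, $420$), and for $c(n)$ they exist for $n=6930$; whether one can exist for some admissible $n$ in the $b(n)$ setting is essentially the content of the conjecture short of actual primality. Your heuristic that ``the multiplicative orders of $10$ modulo the relevant small primes grow too quickly'' points the wrong way: the relevant orders are small ($\mathrm{ord}_3(10)=1$, $\mathrm{ord}_{11}(10)=2$, $\mathrm{ord}_{37}(10)=3$, $\mathrm{ord}_7(10)=\mathrm{ord}_{13}(10)=6$), which is precisely why coverings with period $6$ succeed so often in the paper's experiments for $a(n)$ and $c(n)$. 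So both steps of your plan remain open problems. Your overall conclusion --- prove the trivial direction, acknowledge the rest as conditional/computational --- is the honest and correct assessment and is consistent with the paper leaving this statement as a conjecture.
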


\begin{conjecture} \label{conj:c}
If $c(n) = 0$, then $n$ is divisible by $3$.
\end{conjecture}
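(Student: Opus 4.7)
The plan is to attack the contrapositive: for every $n$ with $\gcd(n,3)=1$, exhibit at least one pair $(d,k)$ with $d\in\{1,3,7,9\}$ and $k\geq 1$ such that $n\cdot 10^k+dR(k)$ is prime. My first step is to eliminate the digits $d\in\{2,4,5,6,8\}$, which force the last digit of the concatenation to give divisibility by $2$ or $5$, and then to verify that the prime $3$ is not itself the obstruction. A digit-sum computation shows that the concatenation has digit sum $S(n)+kd$, so for $d\in\{3,9\}$ every $k$ gives digit sum $\equiv S(n)\not\equiv 0\pmod 3$, and for $d\in\{1,7\}$ two of the three residue classes of $k\bmod 3$ avoid divisibility by $3$. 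Hence none of the four candidate families is killed by divisibility by $3$ alone.

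With the small-prime step out of the way, the problem reduces to showing that at least one of the four families
\[
(n+1)10^k-1,\quad \tfrac{(3n+1)10^k-1}{3},\quad \tfrac{(9n+1)10^k-1}{9},\quad \tfrac{(9n+7)10^k-7}{9}
\]
contains a prime as $k$ varies over positive integers. Heuristically each is a Bunyakovsky-type sequence with expected prime density $\sim 1/\log(n\cdot 10^k)$, so primes should appear infinitely often; an unconditional proof, however, is well out of reach with current technology, as it subsumes Mersenne- and Sierpi\'nski-type questions about the primality of $a\cdot 10^k\pm b$.

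The realistic rigorous route mirrors Theorems \ref{thm:231}--\ref{thm:composite}: the algorithm given in Section 3 can certify $c(n)=0$ only by exhibiting, for each admissible $d$, a covering system of prime divisors of $dR(k)$ that catches every residue class of $k$ modulo some $\mathrm{lcm}$. I would aim to show that any such simultaneous covering (across all four admissible values of $d$) forces $3\mid n$, by tracking, for each small prime $p$ and each residue of $k$ mod $\mathrm{ord}_p(10)$, which residues of $n\pmod p$ get annihilated, and then arguing combinatorially that the case $n\not\equiv 0\pmod 3$ always survives, in the spirit of the Erd\H{o}s--Selfridge theory of covering systems.

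The main obstacle is precisely the gap between ``no covering certificate exists'' and ``a prime actually exists.'' Even a completely successful combinatorial analysis of covering systems can only rule out algorithmic certificates of the Section 3 type; it cannot produce a prime in any of the four sequences. I therefore expect that the honest outcome of this plan is the strictly weaker statement that no such certificate can prove $c(n)=0$ when $3\nmid n$, supplemented by computational verification up to a large bound on $n$, rather than an unconditional proof of Conjecture \ref{conj:c}.
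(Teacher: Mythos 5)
The statement you were asked to prove is presented in the paper only as Conjecture \ref{conj:c}; the paper does not prove it and offers nothing beyond probabilistic primality tests for $n\leq 10^6$ as evidence. There is therefore no proof of the paper's to compare yours against, and your candid admission that the plan cannot deliver an unconditional proof is the correct assessment rather than a defect. Your preliminary reductions are sound: the digits $d\in\{2,4,5,6,8\}$ are eliminated because the concatenation $n\cdot 10^k+dR(k)$ ends in the digit $d$; the digit-sum computation correctly shows that divisibility by $3$ alone cannot kill all four remaining families when $3\nmid n$; and the algebraic forms $(n+1)10^k-1$, $\tfrac{(3n+1)10^k-1}{3}$, $\tfrac{(9n+1)10^k-1}{9}$, $\tfrac{(9n+7)10^k-7}{9}$ are right. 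The genuine obstruction is exactly the one you name: proving that at least one of these sequences contains a prime is of the same difficulty class as open questions about primes of the form $a\cdot 10^k\pm b$, and no current method produces such a prime.

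Two caveats on the fallback you propose. First, the weaker statement that no covering certificate of the paper's type exists when $3\nmid n$ is itself only sketched, not established; note that any such certificate with modulus $k$ forces $\gcd(n,R(k))>1$ (take $i=0$, where $w_0=n$), which constrains $n$ to share a factor with a repunit but does not obviously force $3\mid n$, so the combinatorial analysis you gesture at would require real work. Second, even if completed, that analysis would not imply the conjecture, since $c(n)$ could vanish for reasons not captured by a single-modulus covering; this is the gap between absence of a certificate and existence of a prime, which you correctly identify. In short, your proposal is an honest and correctly reasoned research plan, consistent with the paper's own treatment of the statement as an open conjecture supported by computation, but neither you nor the paper proves it.
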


The decimal value of $c(6069)$ has presumably $1529$ digits.
Furthermore, for $n \leq 15392$, it appears\footnote{The probabilistic primality test has shown that $c(n) > 0$ for $n\neq 6930$ and $n\leq 15392$.  Even though we have proven that $c(6930) = 0$, the cases where $c(n) > 0$ is highly likely to be correct, but not with certainty.} that $c(n) = 0$ if and only if $n = 6930$. 

Probabilistic primality tests also suggest that Conjecture \ref{conj:a},  Conjecture \ref{conj:b}, Conjecture \ref{conj:c} are true for $n\leq 37443$, $n \leq 10^7$ and $n\leq 10^6$ respectively.  

\begin{theorem}
$c(n) = 0$ for $n \in \{6930$, $50358$, $56574$, $72975$, $76098$, $79662$, $82104$, $118041$, $160920 \}$
\end{theorem}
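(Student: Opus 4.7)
The plan is to apply, to each of the nine listed values of $n$, the three-step sufficient condition for $c(n) = 0$ described in the algorithm above. For a fixed $n$, the approach has two stages: (i) factor $n$ in order to determine which digits $d \in \{1, 3, 7, 9\}$ actually need examination (namely those whose prime factors do not divide $n$, since otherwise the concatenation of $n$ and $dR(m)$ is automatically divisible by a common prime factor and hence composite), and (ii) for each surviving $d$, choose a bound $k$ and verify that $\gcd(dR(k), w_i) > 1$ for every $0 \leq i < k$, where $w_i$ denotes the concatenation of $n$ with $dR(i)$.

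For step (i), routine factorization gives for instance $6930 = 2\cdot 3^2\cdot 5\cdot 7\cdot 11$, immediately eliminating $d \in \{3,7,9\}$ and leaving only $d = 1$. Analogous factorizations of $50358$, $56574$, $72975$, and $118041$ likewise reduce to $d = 1$ alone, while $76098$, $79662$, $82104$, and $160920$ (each divisible by $3$ but not by $7$) leave the two cases $d = 1$ and $d = 7$. So at most thirteen sub-cases must be verified.

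For step (ii), the natural first attempt is $k = 6$, because $R(6) = 3\cdot 7\cdot 11\cdot 13\cdot 37$ supplies five small primes for the $w_i$ to land on, and this value sufficed for nearly all $n \leq 10^7$ found in Section~\ref{sec:additional}. When $k = 6$ fails for a particular pair $(n,d)$, I would enlarge $k$ to the next multiple of $6$, exploiting the additional prime factors in $R(12) = R(6)\cdot 101\cdot 9901$ and, if still needed, in $R(18)$ or $R(30)$, following the precedent set by $n = 6363$ and $n = 5391498$ in Section~\ref{sec:additional}. Each surviving sub-case would then be written up in the same format as Theorem~\ref{thm:231}: state the factorization of $n$, exhibit the factorizations of the $k$ numbers $w_i$, and note in each line which prime of $dR(k)$ appears.

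The main obstacle I expect is pinning down the smallest workable $k$ for each sub-case and then factoring the corresponding $w_i$, which may be as large as $n \cdot 10^{k-1}$; since every $n$ in the list is below $2\cdot 10^5$, these numbers remain well within the reach of standard factorization routines even for $k$ of a few dozen. Once the $k$'s are located, the proof reduces to a straightforward tabulation mirroring the structure of the theorems already in Section~\ref{sec:additional}, with no conceptually new ingredient required beyond what the algorithm provides.
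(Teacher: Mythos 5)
Your proposal is correct and follows essentially the same route as the paper: restrict to the digits $d$ whose prime factors do not divide $n$, then apply the $c(n)$ version of the gcd test with $R(6)=3\cdot 7\cdot 11\cdot 13\cdot 37$, exhibiting factorizations of the concatenations of $n$ with $dR(i)$ for $0\leq i<6$. The paper additionally reports that $k=6$ in fact suffices for every one of the $67$ such $n$ below $10^6$ (with the shortcut that when $11\mid n$ only odd $i$ need checking), so your fallback to larger $k$ is never invoked.
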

\begin{proof}
Consider the case $n=6930$.
Since $6930=2\cdot 3^2\cdot 5\cdot 7\cdot 11$, similar to the proofs above we only need to check the primality of appending a repunit $R(m)$ to $6930$.  Again we only need to check for odd $m$ since $11$ divides $6930$.  The factorizations
\begin{itemize}
\item $69301 = 37\cdot 1873$ 
\item $6930111 = 3 \cdot 109 \cdot 21193$
\item $693011111 = 13 \cdot 19 \cdot 2805713$
\end{itemize}
show that $6930$ appended with $R(m)$ all shared a factor with $111111$ and is divisible by either $3$, $11$, $13$ or $37$.  The other cases are similar, with the exception that some of these numbers (such as $56574$ and $72975$) do not divide $11$ and thus we need to consider the factorizations of $n$ appended with $R(m)$ for $0\leq m \leq 5$ and show that they all share a nontrivial factor with $111111$.
\end{proof} 

In particular, we found $67$ numbers less than $10^6$ such that $c(n) = 0$ and for all these numbers this is shown by appending $n$ with $R(m)$ for $0\leq m \leq 5$ and showing that they all share a nontrivial factor with $111111$.

\section{Other bases}
Even though the analysis above focuses on numbers and their decimal representations, it is clear that this extends readily to other number bases as well.

\begin{definition}
Define a {\em repunit} in base $b$ as a number of the form $R_b(n) = \frac{b^n-1}{b-1}$.  It is represented in $b$-ary representation by $n$ 1's.
\end{definition}
\begin{definition} \label{def:fb}
For a number base $b > 2$ and $d<b$, define $f_b(d,m,n)$ as the number obtained by concatenating $m$ times the digit $d$ in front and after the number $n$ in base $b$.
In other words, $f_b(d,m,n) = dR_b(m)b^{t+m}+nb^m+dR_b(m)$ is represented as a concatenation in base $b$ of $dR_b(m)$, $n$ and $dR_b(m)$ where $t$ is the number of digits of $n$ in base $b$.
\end{definition}

For example, since $34 = 100010_2$, $f_2(1,3,34) = 111100010111_2 = 3863$. 
The proof of Theorem \ref{thm:composite} can be used with minor modification to prove:
\begin{theorem} \label{thm:baseb}
If $n$ has an even number of digits in base $b$ and $n$ and $b+1$ are not coprime (i.e. $\mbox{gcd}(n,b+1) > 1$), then $f_b(d,m,n)$ is composite for all $0 \leq d < b$ and $m > 0$. 
\end{theorem}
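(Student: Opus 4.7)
The plan is to mirror the proof of Theorem \ref{thm:composite} almost verbatim, using the congruence $b \equiv -1 \pmod{b+1}$ as the base-$b$ analogue of $10 \equiv -1 \pmod{11}$. The two facts that made the base-10 proof work are: the two-digit number with both digits equal to $d$ has numerical value $d(b+1)$ and is therefore divisible by any prime dividing $b+1$; and, when $q$ is even, the number obtained by flanking a $q$-digit $n$ with one extra $d$ on each side reduces, modulo such a prime, to $n$ itself. Both facts generalize immediately to an arbitrary base.

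The first step is to dispose of $d=0$ separately, since $f_b(0,m,n) = n b^m$ is composite for every $m\ge 1$. For $0<d<b$, I would pick any prime $p$ dividing $\gcd(n,b+1)$, which exists by hypothesis, let $q$ be the (even) number of base-$b$ digits of $n$, and define $n_2 := d\,b^{q+1} + bn + d$, i.e.\ the base-$b$ concatenation of $d$, $n$, $d$. Two divisibilities then need to be established: (i) $p \mid d(b+1)$, which is immediate from $p\mid b+1$; and (ii) $p \mid n_2$, which follows because $p\mid n$ kills the middle term $bn$, while $b\equiv -1\pmod p$ together with $q+1$ odd gives $b^{q+1}+1\equiv 0\pmod p$, hence $p\mid d b^{q+1}+d$.

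The final step is a case split on the parity of $m$. For even $m$, $f_b(d,m,n)$ is the base-$b$ concatenation of $m/2$ copies of the two-digit block $dd$, followed by $n$, followed by $m/2$ more copies of $dd$; for odd $m$, it is $(m-1)/2$ copies of $dd$, then $n_2$, then $(m-1)/2$ copies of $dd$. Every piece is divisible by $p$, so the concatenation observation used in the proof of Theorem \ref{thm:231} yields $p\mid f_b(d,m,n)$. A crude size estimate ($f_b(d,m,n) \ge b^{q+2m-1} \ge b^3 > b+1 \ge p$, using $q\ge 2$, $m\ge 1$ and $b\ge 3$) then rules out equality $f_b(d,m,n)=p$, so the number is composite.

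I do not anticipate any real obstacle: the proof is structurally identical to that of Theorem \ref{thm:composite}. The one subtlety worth flagging is that $b+1$ need not be prime, so the argument has to be phrased in terms of a prime divisor $p$ of $\gcd(n,b+1)$ rather than $b+1$ itself; the congruence $b\equiv -1\pmod p$ still holds, which is the only substantive fact required.
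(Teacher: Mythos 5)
Your proof is correct and is exactly the "minor modification" of the proof of Theorem \ref{thm:composite} that the paper invokes: replace $10\equiv -1\pmod{11}$ by $b\equiv -1\pmod p$ for a prime $p\mid\gcd(n,b+1)$, and decompose $f_b(d,m,n)$ into blocks $dd$ together with $n$ or $n_2$ according to the parity of $m$. Your explicit handling of the fact that $b+1$ need not be prime, and of the degenerate case $d=0$, fills in the details the paper leaves implicit, but the argument is the same.
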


\begin{theorem}
If $b$ is odd and $n$ is even, then $f_b(d,m,n)$ is composite for all $0\leq d< b$ and $m>0$.
\end{theorem}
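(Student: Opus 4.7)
The plan is to reduce $f_b(d,m,n)$ modulo $2$ and observe that it is always even, which together with a size argument gives compositeness. Because $b$ is odd, every power $b^k$ is odd, so $b^k \equiv 1 \pmod 2$. Hence from the explicit formula in Definition \ref{def:fb},
\[
f_b(d,m,n) \;=\; dR_b(m)\,b^{t+m} + n\,b^m + dR_b(m) \;\equiv\; dR_b(m) + n + dR_b(m) \;=\; n + 2dR_b(m) \pmod 2,
\]
and since $n$ is even the right-hand side is $\equiv 0 \pmod 2$. So $2 \mid f_b(d,m,n)$.

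Next I would verify that $f_b(d,m,n) > 2$ so that divisibility by $2$ forces compositeness rather than equality with the prime $2$. If $d \geq 1$, then the leading contribution $dR_b(m)\,b^{t+m}$ already exceeds $b^{t+m} \geq b \geq 3$ (using $b>2$ from Definition \ref{def:fb} and $m>0$). If $d = 0$, then $f_b(0,m,n) = n b^m$; assuming $n \geq 2$ (the only even positive values, and $n=0$ being the degenerate case excluded from consideration), this is at least $2b > 2$. Either way, $f_b(d,m,n) > 2$ and is divisible by $2$, hence composite.

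I do not expect a significant obstacle here: the whole argument is a one-line congruence modulo $2$, once one notices that oddness of $b$ collapses every $b^k$ to $1 \pmod 2$ and the two copies of $dR_b(m)$ cancel in pairs. The only thing requiring minor care is excluding the degenerate value $f_b(d,m,n) = 2$, which is handled by the size bound above.
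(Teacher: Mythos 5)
Your proof is correct and is essentially the paper's argument: the paper groups the two $dR_b(m)$ terms as $dR_b(m)\left(b^{t+m}+1\right)$ and notes $b^{t+m}+1$ is even, which is the same parity observation as your reduction mod $2$, followed by the same size bound. If anything, you are slightly more careful than the paper about the degenerate case $d=0$, but the substance is identical.
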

\begin{proof}
Using the notation in Definition \ref{def:fb}, we have for $m> 0$, 
$$dR_b(m)b^{t+m}+dR_b(m) = dR_b(m)\left(b^{t+m}+1\right) > 2$$ which is even since $b^{t+m}+1$ is even.  Therefore $f_b(d,m,n)>2$ is even and thus composite.
\end{proof}

Using the algorithm in Section \ref{sec:additional} adapted to arbitrary bases, we found  various values of $n$ which are not covered by Theorem \ref{thm:baseb} such that $f_b(d,m,n)$ is composite.
Some of these values are 
listed in Table \ref{tbl:composite}.

\begin{table}[htbp] 
\begin{center}
\begin{tabular}{|c|l|}
\hline
base $b$ & Partial list of $n$ such that $f_b(d,m,n)$ is composite for all $0\leq d< b$ and $m>0$  \\
\hline\hline
$2$ & $2040, 8177, 18179, 32739, 71357, 71532, 101895, 131015, 318929, 403410, 404859$\\
$4$ &  $30, 72, 465, 1020, 3252, 4110, 4965, 4992, 5112, 5475, 6330, 6357, 6477, 6840, 7695$ \\
$6$ & $47215, 74090, 87110, 93870, 120745, 140525, 167400, 178500, 187180, 214055, 232545$ \\
$8$ &  $255, 315, 4305, 5670, 7035, 8400, 9180, 9765, 11130, 12495, 13860, 15225, 16590$ \\
$12$ & $1155, 23695, 28875, 30800, 41965, 53130, 64295, 75460, 86625, 97790, 108955, 120120$\\
$14$ & $46917,9151272, 9382542, 11892387, 14402232, 16912077, 17638335, 19421922$ \\
\hline

\end{tabular}
\end{center}
\caption{Some values of $n$ and $b$ such that $f_b(d,m,n)$ is composite for all $0\leq d< b$ and $m>0$ and were not covered by Theorem \ref{thm:baseb}.}
\label{tbl:composite}
\end{table}

\end{document}